\newtheorem{theorem}{Theorem}[section]
\newtheorem{thm}[theorem]{Theorem}
\def\gcd{{\rm gcd}}
\def\mod{\; {\rm mod} \; }
\def\X{\mathcal{X}}
\def\Y{\mathcal{Y}}
\def\Z{\mathbb{Z}}
\title{A Multivariable Chinese Remainder Theorem}
\date{v1: January 27, 2005, v2: June 22, 2012}
\author{Oliver Knill}
\address{
        Department of Mathematics \\
        Harvard University \\
        Cambridge, MA, 02138\\
        }
\subjclass{11Y50,01A25,15A06}
\keywords{Chinese Remainder Theorem, History of number theory, Linear Diophantine equations, Chinese mathematics }
\begin{document}

\maketitle

\begin{abstract}
Using an adaptation of Qin Jiushao's method from the 13th century, it is possible to 
prove that a system of linear modular equations 
$a_{i1} x_i + \cdots + a_{in} x_n = \vec{b}_i  \mod  \vec{m}_i, i=1,\dots ,n$
has integer solutions if $m_i>1$ are pairwise relatively prime and in each row, 
at least one matrix element $a_{ij}$ is relatively prime to $m_i$. 
The Chinese remainder theorem is the special case, where $A$ has only one column.
\end{abstract}

\section{The statement with proof}

Consider a linear system of equations $A \vec{x} = \vec{b}  \mod  \vec{m}$,
where $A$ is an integer $n \times n$ matrix and $\vec{b},\vec{m}$ are
integer vectors with coefficients $m_i>1$.

\begin{thm}[Multivariable CRT]
If $m_i$ are pairwise relatively prime and in each row, at least one matrix
element is relatively prime to $m_i$, then
$A \vec{x} = \vec{b}  \mod  \vec{m}$ has solutions for all $\vec{b}$.
There is a solution $\vec{x}$ in an $n$-dimensional
parallelepiped ${\mathcal X} = \Z_M^n/L$ of volume $M=m_1 \cdots  m_n$, where
$L$ is a lattice in $\Z_M^n$.
\end{thm}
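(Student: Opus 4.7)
The plan is to reduce the multivariable system to classical (one-variable) CRT by splitting residues modulo $M$ into their components modulo each $m_i$. Since the $m_i$ are pairwise coprime, the classical CRT gives a ring isomorphism $\Z/M \cong \prod_{i=1}^n \Z/m_i$, which componentwise yields a bijection $(\Z/M)^n \cong \prod_i (\Z/m_i)^n$. Writing $\vec{x}^{(i)} := \vec{x} \mod m_i$, the crucial observation is that the $i$-th equation $\sum_j a_{ij} x_j \equiv b_i \mod m_i$ depends only on $\vec{x}^{(i)}$, so the $n$ equations decouple across the product.

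Next I would solve each equation individually inside its own factor $(\Z/m_i)^n$. For each row $i$, pick an index $j_i$ with $\gcd(a_{i,j_i}, m_i) = 1$, guaranteed by hypothesis. Then $a_{i,j_i}$ has an inverse $\alpha_i$ modulo $m_i$, and for any arbitrary choice of $x_k^{(i)} \in \Z/m_i$ with $k \neq j_i$, setting
\[
  x_{j_i}^{(i)} \equiv \alpha_i\!\Bigl(b_i - \sum_{k \neq j_i} a_{ik}\, x_k^{(i)}\Bigr) \pmod{m_i}
\]
solves equation $i$. Performing this independently for each $i$ and gluing the $\vec{x}^{(i)}$ back together via the componentwise CRT isomorphism yields a vector $\vec{x} \in (\Z/M)^n$, which lifts to an integer solution of $A\vec{x} \equiv \vec{b} \mod \vec{m}$.

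For the geometric refinement, let $L \subset \Z^n$ be the lattice of solutions of the homogeneous system $A\vec{x} \equiv \vec{0} \mod \vec{m}$. The full solution set is then the coset $\vec{x}_0 + L$, and a unique canonical representative lives in a fundamental parallelepiped $\X = \Z^n/L$. The parameter count in the previous paragraph, in which each row $i$ leaves exactly $n-1$ free coordinates mod $m_i$, says that the kernel of row $i$ has index $m_i$ in $(\Z/m_i)^n$; invoking the componentwise CRT again, pairwise coprimality of the $m_i$ makes these constraints independent, so $[\Z^n:L] = \prod_i m_i = M$, giving a fundamental parallelepiped of volume $M$.

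The main obstacle I anticipate is bookkeeping rather than conceptual: the reduction to classical CRT is immediate once one sees that the row modulus $m_i$ is precisely what makes the $i$-th equation ``see'' only the $i$-th CRT component, but verifying the lattice index $M$ rigorously (hence the parallelepiped volume) requires carefully assembling the per-row surjectivity onto $\Z/m_i$ into a single surjection $\Z^n \twoheadrightarrow \prod_i \Z/m_i$ whose kernel is $L$.
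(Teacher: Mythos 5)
Your proof is correct, but it takes a genuinely different route from the paper. You decouple the system globally: the componentwise CRT isomorphism $(\Z/M)^n \cong \prod_i (\Z/m_i)^n$ shows that the $i$-th equation constrains only the $i$-th factor, each single congruence $\sum_j a_{ij}x_j \equiv b_i \mod m_i$ is solvable because the pivot $a_{i,j_i}$ is invertible mod $m_i$, and gluing the per-factor solutions back gives $\vec{x}$; the lattice index $[\Z^n:L]=M$ then falls out of the surjectivity of $\Z^n \to \prod_i \Z/m_i$, $\vec{x}\mapsto (\mathrm{row}_i\cdot\vec{x} \mod m_i)_i$. The paper instead builds the solution sequentially in the spirit of Qin Jiushao: it sets $\vec{x} = \sum_i t_i\,(m_1\cdots m_{i-1})\,\vec{e}_{j(i)}$ and solves for $t_1, t_2, \dots$ one row at a time, the prefactor $m_1\cdots m_{i-1}$ guaranteeing both that earlier equations are undisturbed (it vanishes mod $m_1,\dots,m_{i-1}$) and that the $i$-th equation remains solvable for $t_i$ (it is invertible mod $m_i$ by pairwise coprimality). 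The hypotheses enter in the same two places in both arguments, but the trade-off is real: your decomposition is more structural, makes the existence claim and the volume count $[\Z^n:L]=\prod_i m_i$ essentially immediate, and matches the algebraic reformulation the paper itself gives in Remark 7; the paper's version is an explicit lifting algorithm (a Garner-style successive substitution) that produces a concrete solution without first computing the CRT idempotents, which is the point of its historical and computational discussion. One small polish item on your side: when you assert that the $n$ row-kernels impose independent conditions, say explicitly that this is because the combined map $\Z^n \to \prod_i \Z/m_i$ is the composite of the reduction $\Z^n \to \prod_i(\Z/m_i)^n$ with maps acting separately on each factor, each of which you have already shown surjective; that closes the bookkeeping gap you flagged.
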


\begin{proof}
The map $\phi: x \to A x  \mod  \vec{m}$ is a group homomorphism from the Abelian group
$X=\Z^n$ to the finite Abelian group $\Y = \Z_{m_1} \times \cdots \times \Z_{m_n}=\Y/L$,
where $L=(m_1 \Z) \times \cdots \times (m_n \Z)$ is a lattice subgroup of $\Y$.
The kernel of $\phi$ is a subgroup $L_A$ of $X$ and $\X= X/L_A$.
The image of $\phi$ is a subgroup of $\Y$.
By the first isomorphism theorem in group theory, the quotient group $\X$ and the image are isomorphic.
The kernel $L_A$ is a lattice in $X$ spanned by $n$ vectors $\vec{k}_1, \dots ,\vec{k}_n$. 
The map $\phi$ is injective on $\X$. By the Lagrange theorem in group
theory, there exist finitely many vectors $\vec{y}_i \in \Y$ such that
$\bigcup_{i=1}^{d(A)} A(\X) + \vec{y}_i = \Y$.
The problem is solvable for all $\vec{b}$
if and only if $d(A)=1$. For every $\vec{b}$, there exists then a unique integer vector
$\vec{x}$ in $\X$ such that $A \vec{x} = \vec{b}  \mod  \vec{m}$.
As in the usual CRT, we have a solution if each equation has a solution.
To construct a solution, pick matrix elements $a_{ij(k)}$
such that the $i$'th row is relatively prime to $m_i$. Let $\vec{e}_j$ denote the
standard basis in n-dimensional space. 
Consider a line $\vec{x}(t) = t \vec{e}_{j(1)}$ in $X$, where $t$ is an
integer. There exists an integer $t_1$ so that $\vec{x}(t)$ solves the first equation.
Now take the line $\vec{x}(t) =  t_1 \vec{e}_{j(1)} + t m_1 \vec{e}_{j(2)}$.
There is an integer $t_2$ so that $\vec{x}(t)$ solves the second equation.
This is possible because $m_1$ is relatively prime to $m_2$. Note that
$\vec{x}(t)$ still solves the first equation for all $t$. We have now a 
solution to two equations. Continue in the same way until the final solution
$\vec{x}(t) = \sum t_i (m_1 \dots m_i) \vec{e}_{i j(i)}$ is reached.
\end{proof}

{\bf Example:}
$$   \left[ \begin{array}{cc} 101 & 107 \\ 51 & 22 \end{array} \right] \left[ \begin{array}{c} x \\ y \end{array}  \right]
   = \left[ \begin{array}{c} 3 \\ 7 \end{array} \right] \; \mod \left[ \begin{array}{c} 117 \\ 71 \end{array} \right] \;  $$
is solved by $\left[ \begin{array}{c} x \\y \end{array} \right] = \left[ \begin{array}{c} 25 \\ 65 \end{array} \right]$. 
The lattice $L_A$ is spanned by $\left[ \begin{array}{c} 73 \\ 47 \end{array} \right],
\left[ \begin{array}{c} -82 \\ 61 \end{array} \right]$. \\

{\bf Remark.} The original paper of January 27, 2005 (google "multivariable chinese remainder")
had been written in the context of multidimensional Diophantine approximation 
and was part of a talk on April 11, 2005 in Dmitry Kleinbock's seminar at Brandeis. 
[I had at that time been interested in the Diophantine problem to find for $\theta,\alpha,\beta$ small integers $n,m$ such that 
$\theta + \alpha n + \beta m$ is close to an integer. In one dimensions this is 
achieved by continued fraction expansions \cite{Chinchin92,Hua}, 
but the problem is complex in two or higher dimensions. Solving
it effectively would lead to faster integer factorization algorithms.]
Referees at that time found the paper too elementary. While this is probably true, I feel even 
after 7 years and more literature research, that the result might well have been overlooked.
Additionally, the multivariable CRT can serve as an exercise in algebra or spice up an exposition 
about the traditional CRT. The story also has a historical angle when looking for 
the origin of solving systems of linear equations with integer solutions. There was some controversy for example whether 
Nicomachus has known anything about the CRT even so there is 
much evidence against it \cite{Libbrecht}. Historically, it appears now certain that 
the method to solve the multivariable CRT is due to the mathematician Qin in the 13th century, 
an algorithm which in modern language would be considered a special case and precursor
for the Schreier-Sims algorithm \cite{Baumslag} 
in computational group theory, an algorithm which is naturally used by
everybody who solves puzzles whether it is the simple 15 puzzle or the more challenging Rubik cube \cite{Joyner}. \\

The following changes were done for this update: the statement, the proof and an example are
stated initially, some figures are gone, the text is streamlined and examples and 
remarks are separated. Remarks 10)-16) as well as more references like 
\cite{Koshy,Swetz,Libbrecht,Kangsheng,Gauss,YanShiran,Katz2011,Stillwell,Dauben,Martzloff,Fiol,GGR} 
as well as some Mathematica code was added. A new literature search revealed
\cite{GGR} from which one can deduce our theorem, but it looks considerably less elementary.
More digging in sources revealed that the proof is close to Qin Jiushao's ``method of finding one".
This algorithm from the 13th century is especially remarkable because Qin did not have group theory 
nor even the notion of prime numbers at that time.

\section{Historical background}

The Chinese remainder theorem (CRT) is one of the oldest theorems 
in mathematics. It has been used to calculate calendars as early 
as the first century AD \cite{dicksonII,sheng88}. 
The earliest recorded instance of work with indeterminate equations in 
China can be found in the 'Chiu-Chang Suan Shu', the ``Nine Chapters on the mathematical art",
where a system of four equations with
five unknowns appears \cite{Swetz}. This text is also an early source for Gaussian elimination \cite{Stillwell,Katz2011}.
The mathematician Sun-Tsu (also Sun Zi), in the Chinese work 'Sunzi Suanjiing' (Sun Tzu Suan Ching)
which translates to either ``Master Sun's Mathematical Manual" or ``Sun-Tzu's Calculation Classic"
from the third century considered the problem to find integral solutions to
\begin{eqnarray*} 
         x &=& 2  \mod  3 \\
         x &=& 3  \mod  5 \\ 
         x &=& 2  \mod  7 \; . 
\end{eqnarray*} 
This example has the solution $x=23$ (see also \cite{Dauben}). It has been reported in \cite{dicksonII} 
\footnote{Dickson references Y. Mikami, Abh. Geschichte Math, 30, 1912, p.32.
The connection of Nicomachus with the CRT is disputed in \cite{Libbrecht}. 
We also could not find the example in \cite{Nicomachus} but
other textbooks of Nicomachus are only referred to by other sources.} 
to appear also in a textbook of Nicomachus of Gerasa in the first century. The consensus is
today however, Sun Zi's text is indeed the first known occurrence of the CRT and that there is no Nicomachus
connection. Unfortunately, 'Sunzi Suanjing' is hard to date. The average of all estimates 
points towards 250AD \cite{ConnorRobertsonSunzi} but it could be dated as late as the 4th 
century \cite{Libbrecht}. \\

Mathematics earlier than that is probably void of the CRT. It is not a topic of
'Suan Shu Shu' for example, an ancient Chinese collection of writings on
bamboo strips \cite{Dauben} which is an anonymous text from about 200 BC 
and which does not contain linear algebra yet. 
Calendars were the presumably the major motivation for the CRT \cite{Kangsheng}:
{\it Congruences of first degree were necessary to calculate calendars in ancient China as early as the 2nd 
century BC. Subsequently, in making the Jingchu calendar (237,AD), the astronomers defined 
Shangyuan as the starting point of the calendar.}
``Master Sun's math manual'' is now considered the earliest source of the CRT and 
the 'Shushu Jiuzhang' = ``Mathematical Treatise in Nine Sections" in 1247 the earliest description of a
solution algorithm. 
\footnote{This 13th century text should not be confused with the much older arithmetic textbook 
"Nine Chapters on the Mathematical Art".} According to 
\cite{YanShiran}, Qin Jiushao called his technique ``method of finding one",
which achieved his goal without using concepts like prime number of prime factors.
While nine problems in that text were exercises without applications,
there was one problem dealing with calendar applications \cite{Martzloff}.
The Mathematics in that work is a major topic in the thesis \cite{Libbrecht}
of the sinologist Ulrich Libbrecht. \\ 

The development of the CRT from the fourth to the 16'th century is fascinating and multi-cultural. 
In chapter 14 of \cite{Libbrecht} we can read: 
{\it In India there were Brahmagupta (ca 625) and Bhaskara (12th century),
who developed the kuttaka method. In the Islamic world, Ibn al-Haitham treats this kind of 
problem and he may have influenced Leonardo Pisano (Fibonacci) in Italy. After the 
thirteenth century we do not find much further investigation in China, India or the Islamic world. But
from the fifteenth century on there is a marked increase in European research, 
which reached its apogee in the studies of Lagrange, Euler and Gauss.} 
Also Japanese mathematicians were involved: from \cite{Kangsheng}: {\it The Japanese mathematician Seki Takakazua wrote 
``Kwatsuyo sampo" (Essential Algorithm) in 1683, the second chapter of which, Sho yukujutu su, 
deals with some algorithms corresponding to Qin's work.}. The last reference is of course to
Qin Jiushao (=Chiu-Shao) the author of ``Shushu Jiuzhang". \\

Linear congruences of more variables must have appeared only later. We do not count in 
examples like $5x+3y+z/3=100,x+y+z=100$ which occur in 
Zhanag Quijian's Mathematical manual of 475 \cite{Katz2011} 
with two equations of three variables. This can be considered a case for single variables
because a substitution leads to a modular equation in one variable.
Dickson \cite{dicksonII} gives as the first reference Sch\"onemann, who considered in 
the year 1839 equations of the form
$a_1 x_1 + \cdots + a_n x_n = 0  \mod  p$,
where $p$ is a prime. It was probably Gauss \cite{Gauss}, who first looked at modular systems of 
$n$ linear equations of $n$ unknowns. The case of equal moduli definitely dominates. 
The case of different moduli is less clear since it was usually reduced to the case of equal moduli: 
George Mathews noted in \cite{mathews1892} that a system of linear equations 
$A \vec{x} = \vec{b}   \mod  \vec{m}$ can be reduced to 
a system $B \vec{x} = \vec{a}  \mod  m$, where 
$m= {\rm lcm}(m_1,\dots ,m_n)$.  For example, the system
\begin{eqnarray*} 
         x+y &=& 1  \mod  3 \\
         x-y &=& 2  \mod  5 
\end{eqnarray*} 
which has solution $x=3,y=1$ is equivalent to 
\begin{eqnarray*} 
         5x+5y &=& 5  \mod  15 \\
         3x-3y &=& 6  \mod  15  \; . 
\end{eqnarray*}
However, since many results and methods developed for a single moduli do not work
- like row reduction or the inversion by Cramer's formula -
there is not much gained with such a reduction. The Mathews reduction does also 
not allow to use the multivariable CRT generalization proven here. The reduction 
even does not help to solve the single variable CRT! Actually, the multivariable CRT
we study here is in nature closer to the one variable CRT than to linear algebra.\\

Gauss treated in his "disquisitiones arithmetica" (see \cite{Gauss} page 29) of 1801
systems of linear congruences but also with equal moduli. He considered in particular the system
\begin{eqnarray*} 
         3x+5y +  z &=& 4  \mod  12 \\
         2x+3y + 2z &=& 7  \mod  12 \\
         5x+ y + 3z &=& 6  \mod  12 
\end{eqnarray*}
which has the four solutions $(2,11,3),(5,11,6),(8,11,9),(11,11,0)$ 
in ${\bf Z}_{12}^3$. 
The discrete parallelepiped spanned by $(3,0,3),(12,0,0),(0,12,0)$ is mapped by the linear
map $A$ bijectively to a proper subset of $\Z_{12}^3$. Indeed, 
the matrix $A$ over the ring $\Z_{12}$ is not invertible because
${\rm det}(A)=4$ is not invertible in $\Z_{12}$. 
In Gauss example, there is a parallelepiped in $\Z_{12}^3$ which is
mapped onto a proper subset of $\Z_{12}^3$ by the transformation $A \vec{x}  \mod  12$. 
For the same matrix $A$, only one forth of all vectors $\vec{b}$ in $\Z_{12}^3$ allow
that $A \vec{x} = \vec{b}  \mod  12$ can be solved. In that case,
there are four solutions. 
Elimination was used by 
Gauss also as a method to solve such linear systems of Diophantine equations: subtracting 
the last row from the sum of the first two gives $7 y = 5  \mod  12$ or
$y=11$. We end up with the system 
\begin{eqnarray*} 
         3x+ z  &=& 9  \mod  12 \\
         5x+ 3z &=& 7  \mod  12  \; . 
\end{eqnarray*}
Eliminating $x$ gives $4z=0  \mod  12$ or $z = 0  \mod  3$
which leads to the $4$ solutions $z=0,3,6,9$. In each case, the solution $x$ is determined. 
H.J.S. Smith \cite{dicksonII} noted in 1859 that if all moduli are the same $m$ and ${\rm det}(A)$ is 
relatively prime to $m$, then $A \vec{x} = \vec{b}  \mod  m$ has a unique 
solution in the module $\Z_m^n$ over the ring $\Z_m$.
Also Cramer's rule (from 1750 evenso in the context of real numbers) gives the explicit solution
$\vec{x}_i = {\rm det}(A_{\vec{b},i}) {\rm det}(A)^{-1}$
in which the determinant ${\rm det}(A)$ is inverted in $\Z_m$ and 
$A_{\vec{b},i}$ is the matrix in which the $i$'th column had been
replaced by $\vec{b}$. Smith had noted first that ${\rm det}(A)$ must 
be prime to $m$. For integer matrix arithmetic in number theory, see \cite{Hua}.  \\

Systems of linear modular equations definitely have been treated in the 18'th century,
when all moduli $m_i$ are equal. The general
case with different $m_i$ can be reduced to this case when all the moduli are
all powers of prime numbers with the equivalence of each equation 
$a_1 x_1 + \cdots+ a_n x_n = b \mod q_1^{k_1} \cdots q_l^{k_l}$ to 
\begin{eqnarray*}
    a_1 x_1 + \cdots + a_n x_n &=& b \mod q_1^{k_1} \\
        &\dots&  \\
    a_1 x_1 + \cdots + a_n x_n &=& b \mod q_l^{k_l} \; .
\end{eqnarray*}
As in the CRT, we can not do row-reduction with different moduli in general so that this is not
a standard linear algebra problem any more. 
As the Mathew reduction has shown, the general case can also be reduced to the case when all moduli 
are equal but methods which worked before can no more be applied then.
For example, the determinant of the new matrix is zero in $\Z_m$. \\

As in linear algebra, complexity problems of solving$A \vec{x} = \vec{b}  \mod  \vec{p}$ are far from trivial. 
Beside the aim to find the structure of the solutions of a system of modular linear equations, 
there is the computational task to find solutions and a minimal area parallelepiped in $\Z_M^n$ on which
$A$ is injective as a map to $\Z_{m_1} \times \cdots  \times \Z_{m_n}$. 
How many computation steps are needed to
decide whether a system has a solution and how many steps are required to
find it? The question is addressed in \cite{DoSt01}, where the problem is dealt with
the method of quantifier elimination in discretely valued fields.  \\

Our approach here is elementary like the single variable CRT and generalizes Qin's approach to
the usual CRT, in which solutions can be found in $\{0, \dots ,M-1 \; \}$, which can 
also be interpreted as a parallelepiped of length
$M=m_1 m_2 \cdots m_n$ and width dimensions of length $1$. \\

We still do not know the best way to find an optimal kernel (LLL helps a lot but 
is not always optimal) and decide effectively, when a general system
$A \vec{x} = \vec{b}   \mod  \vec{m}$ has a solution and
when not. Our theorem only gives a sufficient condition. 
The efficiency part is especially relevant in 
cryptological context like in lattice attacks \cite{how01}, where
one tries to reconstruct the keys from several messages.

\section{Examples}

We look now at a few examples of systems of $n=2$ equations $A \vec{x} = \vec{b}  \mod  \vec{m}$,
where $\vec{m}=(p,q)$ has the property that $p,q$
are relatively prime. Unlike in the situation $\vec{m}=(p,p)$ with prime $p$,
where the solution can be found in the fixed algebra over the finite field $\Z_p$, it
does now not matter in general, how singular the matrix $A$ is.
The decision known from linear algebra about the existence of solutions,
unique solvabilty or non-solvabilty has still to be made: \\

{\bf Example 1)}
\begin{eqnarray*} 
         x+y &=& 1  \mod  3 \\
         x-y &=& 2  \mod  5 \; . 
\end{eqnarray*}  
To a given solution like $\vec{x} = (3,1)$, we can add
solutions of the homogeneous equation $A \vec{x}_0 = \vec{0}$
like $(2,7),(3,3),(-1,4),(1,11)$. This is an example, where
solutions exist for all vectors $\vec{b}$. The curve
$\vec{x}(t)=(3t,t) \mod \; p$ reduces the problem to the 
single variable CRT case
\begin{eqnarray*} 
         4t &=& 1  \mod  3 \\
         2t &=& 2  \mod  5 
\end{eqnarray*}
which always can be solved for $t$. \\

{\bf Example 2)}
\begin{eqnarray*} 
        2x  + 3y   &=& 6  \mod  7  \\
       -3x  - 9y   &=& 3  \mod  12  \; . 
\end{eqnarray*}
This is an example, where the existence of integer solution $(x,y)$ depends
on the vector $\vec{b}$. The above example has a solution. The system
\begin{eqnarray*} 
        2x  + 3y   &=& 1  \mod  7  \\
       -3x  - 9y   &=& 1  \mod  12 
\end{eqnarray*}
has no solution. In the set $\Z_7 \times \Z_{12}$ with $84$ elements, we count 
28 vectors $\vec{b}$ for which there is a solution and $56$ elements, for 
which there is no solution.  \\


{\bf Example 3)}
\begin{eqnarray*} 
        6x  - 4y   &=& 7  \mod  7  \\
       10x  - 5y   &=& 1  \mod  5  \; . 
\end{eqnarray*} 
There is no solution because the second equation reads 
$0=1$ modulo $5$. However, for a different $\vec{b}$ like
\begin{eqnarray*} 
        6x  - 4y   &=& 2  \mod  7  \\
       10x  - 5y   &=& 5  \mod  5  \; , 
\end{eqnarray*}
we have a solution $\vec{x}=(1,1)$. In the set $\Z_7 \times \Z_{5}$ with $35$ 
elements, only $7$ vectors $\vec{b}$ give a system with a solution.  \\


{\bf Example 4)} The system
\begin{eqnarray*} 
         x+y &=& 1  \mod  3 \\
         x+y &=& 2  \mod  5  \; 
\end{eqnarray*}
can be reduced to a case of the CRT case:
\begin{eqnarray*} 
         z &=& 1  \mod  3 \\
         z &=& 2  \mod  5  \; 
\end{eqnarray*}
and is solved for $z=7$. In the set $\Z_3 \times \Z_5$ with 15 elements, 
every vector $\vec{b}$ has a unique solution $z$. The original system has now
solutions like $\vec{x}=(1,6)$ or $\vec{x}=(2,5)$.  \\


{\bf Example 5)} 
The size of the lattice $L$ in $\Z_M^n$ can vary when $\vec{m}$ is fixed. Here
is a case with a relatively narrow lattice spanned by the vectors $(1,-3), (43,14)$:


\begin{eqnarray*} 
        6x  - 2y  &=& 0  \mod  11  \\
       11x  - 5y  &=& 0  \mod  13  \; . 
\end{eqnarray*}
The extreme case is the CRT case, where the
lattice has dimensions $143 \times 1$: 
\begin{eqnarray*} 
        6x  - 3y  &=& 0  \mod  11  \\
       12x  - 6y  &=& 0  \mod  13 \; . 
\end{eqnarray*}

Next, we look now at examples, where the moduli $m_i$ are not necessarily pairwise
prime:  \\

{\bf Example 6)} This example is a case for linear algebra.
If $\vec{m} = (m_1\dots,m_n)=(p,\dots,p)$, where $p$ is a prime number, we have 
a linear system of equations over the finite field $F_p$. 
This is a problem of linear algebra, where solutions can be found 
by Gaussian elimination or by inverting the matrix. If the determinant 
of $A$ is nonzero in the field $\Z_p$, then $A^{-1}$ exists and 
$x=A^{-1} y$. For example, with $p=11$, solving 
$A \vec{x} = \vec{b}  \mod  \vec{m}$:
$$ \left[ \begin{array}{ccc} 
 2 & 1 & 2 \\
 1 & 2 & 9 \\
 1 & 2 & 7 
\end{array} \right]  \left[ \begin{array}{c} x \\ y \\ z \end{array} \right]
= \left[ \begin{array}{c} 1 \\ 2 \\ 3 \end{array} \right]  \mod  11  $$
is done in the same way as over the field of real numbers. The determinant is
$5$ modulo $p=11$ so that the matrix is invertible over $F_p$. The inverse of 
$A$ in $F_p$ is 
$A^{-1} = \left[ \begin{array}{ccc} 
      8 & 6 & 1 \\
      7 & 9 & 10 \\
      0 & 6 & 5 
                 \end{array} \right]$ and $A^{-1} \vec{b} 
    = \left[ \begin{array}{c} 1 \\ 0 \\ 5 \end{array} \right]$. 
Indeed $\vec{x} = \left[ \begin{array}{c} 1 \\0 \\ 5 \end{array} \right]$ 
solves the original system of equations. \\

{\bf Example 7)}:
If only one column is nonzero, we have the Chinese reminder theorem.
If the matrix $A$ has only one nonzero column, we are in the CRT situation.
This problem was  considered 2000 years ago and was given its final form by Euler. 
For example, 
$$  \left[ \begin{array}{ccccc} 
           0 & 2 & 0 & 0 & 0 \\
           0 & 3 & 0 & 0 & 0 \\
           0 & 1 & 0 & 0 & 0 \\
           0 & 9 & 0 & 0 & 0 \\
                       \end{array} \right] 
    \left[ \begin{array}{c} x_1 \\ x_2 \\ x_3 \\ x_4 \end{array} \right] 
   = \left[ \begin{array}{c} 5 \\ 8 \\ 11 \\ 9 \end{array} \right] 
    \mod  
    \left[ \begin{array}{c} 3 \\ 11 \\ 7 \\ 13 \end{array} \right]  $$
is equivalent to 
\begin{eqnarray*}
 2 x&=&5   \mod  3   \\
 3 x&=&8   \mod   11  \\
   x&=&11  \mod  7  \\
 9 x&=&9   \mod  13 \;  .
\end{eqnarray*} 
Also the original CRT problem
$a_i x = b_i  \mod  m_i$ can be solved
in a geometric language: with an integer "time" parameter $t$ and the
"velocity" $\vec{v}=(v_1,...,v_n)$,  the parameterized curve
$\vec{r}(t) = t \vec{v} \;  \mod \; \vec{m}$ is a line
on the "discrete torus" $\Y = \Z_{m_1} \times \cdots \times \Z_{m_n}$.
It covers the entire torus if the integers $m_i$ are pairwise relatively prime 
and $a_i \neq 0  \mod  m_i$. 
One can solve the task of hitting a specific point $\vec{b}$ on the torus by 
solving the first equation $v_1 x_1 = b_1  \mod  m_1$, then consider 
the curve $v_1 (x_1 + m_1 t)$, reducing the problem to a similar problem 
in one dimension less. Proceeding like this leads to the solution.
The solution for the CRT was easy to find, because the group was Abelian.
The strategy to retreat in larger and larger centralizer subgroups is also
the key to navigate around in non-Abelian finite groups like
``Rubik" type puzzles, where one first fixes a part of the cube and then tries to 
construct words in the finitely presented group which fixed that subgroup. 
It is a natural idea which puzzle-solvers without mathematical 
training come up with. By the way, also the Gaussian elimination process 
is an incarnation of this principle.  \\

{\bf Example 8)} Here is a case where we have independent equations.
If $A$ is a diagonal matrix we have $n$ independent equations of the form 
$a_i x_i = b_i  \mod   m_i$. Solutions exist if $\gcd(a_i,m_i)=1$
for all $i$.  If $\gcd(a,m)>1$ like $a=3, p=6$, there are no solutions
of $3 x = 2  \mod  6$ as can be seen by inspecting 
the equation modulo $3$. Example: 
$$  A \vec{x} = \left[ \begin{array}{ccccc} 
           2 & 0 & 0 & 0 \\
           0 & 3 & 0 & 0 \\
           0 & 0 & 5 & 0 \\
           0 & 0 & 0 & 7 \\
                       \end{array} \right] 
    \left[ \begin{array}{c} x_1 \\ x_2 \\ x_3 \\ x_4 \end{array} \right] 
   = \left[ \begin{array}{c} 5 \\ 8 \\ 11 \\ 9 \end{array} \right] 
    \mod  
    \left[ \begin{array}{c} 3 \\ 11 \\ 7 \\ 13 \end{array} \right]  \; . $$

{\bf Example 9)} Here is a case, where row reduction works.
If $A$ is upper triangular or lower triangular matrix, the system can be 
solved by successively solving systems $a_i x = b_i  \mod   m_i$. 
Again, we have solutions if $\gcd(a_i,m_i)=1$ for all $i$. 
$$  A \vec{x} = \left[ \begin{array}{ccccc} 
           2 & 1 & 1 & 0 \\
           0 & 3 & 2 & 1 \\
           0 & 0 & 5 & 1 \\
           0 & 0 & 0 & 7 \\
                       \end{array} \right] 
    \left[ \begin{array}{c} x_1 \\ x_2 \\ x_3 \\ x_4 \end{array} \right] 
   = \left[ \begin{array}{c} 5 \\ 8 \\ 11 \\ 9 \end{array} \right] 
    \mod  
    \left[ \begin{array}{c} 3 \\ 11 \\ 7 \\ 13 \end{array} \right]  \; . $$

{\bf Example 10)} This is an example, when $A$ is modular.
If $A^{-1}$ has only integer entries, solutions can be obtained directly 
with the formula $x = A^{-1} \vec{b}$ in $\Z^n$. This works if $A$ is modular 
that is if $A$ has determinant $1$ or $-1$:
$$A \vec{x} = \left[ \begin{array}{ccc} 
 5 & 3 & 4 \\
 1 & 4 & 1 \\
 5 & 2 & 4 
\end{array} \right]  \left[ \begin{array}{c} x \\ y \\ z \end{array} \right]
= \left[ \begin{array}{c} 1 \\ 2 \\ 3 \end{array} \right]  \mod  
\left[ \begin{array}{c} 5 \\ 7 \\ 11 \end{array} \right]
= \vec{b}  \mod  \vec{m} \; . $$
We get 
$\left[ \begin{array}{c} x \\ y \\ z \end{array} \right]
= A^{-1}  \left[ \begin{array}{c} 1 \\ 2 \\ 3 \end{array} \right] 
= \left[ \begin{array}{ccc} 
14 &-4 & -13 \\
 1 & 0 & -1 \\
-18& 5 & 17 
\end{array} \right] \left[ \begin{array}{c} 1 \\ 2 \\ 3 \end{array} \right]  
=  \left[ \begin{array}{c}-33 \\ -2 \\ 43 \end{array} \right]$.  


{\bf Example 11)}. For
\begin{eqnarray*}
x+3y+z  &=& 1  \mod  8 \\
4x+y+5z &=& 7  \mod  8 \\
2x+2y+z &=& 3  \mod  8 \; ,
\end{eqnarray*}
Gauss gives the solution $x=6,y=4, z=7  \mod  8$ . Indeed, we would write today modulo 8,                
$$ A=\left[ \begin{array}{ccc}
                  1 & 3 & 1 \\
                  4 & 1 & 5 \\
                  2 & 2 & 1
                 \end{array} \right]
   A^{-1} = \left[
                  \begin{array}{ccc}
                   1 & 1 & 2 \\
                   2 & 1 & 1 \\
                   2 & 4 & 3
                  \end{array}
                  \right], A^{-1}.\left[ \begin{array}{c} 1\\ 7 \\ 3 \end{array} \right]  
            = \left[ \begin{array}{c} 6 \\ 4 \\ 7 \end{array} \right] \; .  $$
On a more curious side, we could rewrite the original equations as
\begin{eqnarray*}
x+z+8u  = 1  \mod  3 \\
4x+y+8v = 7  \mod  5 \\
2x+z+8w = 3  \mod  2
\end{eqnarray*}
and apply the multivariable CRT to see that there is a solution.   

\section{Remarks}

{\bf 1)} If all moduli $m_i$ are equal to a prime $m=p$, the problem can be solved
using linear algebra over the finite field $F_p$. As noted first
150 years ago by Smith, if $m$ is not prime, but the determinant of the matrix $A$ is invertible in the
ring $\Z_m$, then the problem can be solved for all $\vec{b}$. \\
{\bf 2)} If $A$ has only one nonzero column, the problem is the
CRT, one of the first topics which appears in any introduction to number theory. 
Also if there is a column $A_{ij}$ with fixed $j$ for which $\gcd(A_{ij},m_i)=1$, then 
we can set $x_1,\dots,x_{j-1},x_{j+1},\dots,x_n=0$ and solve for $x_j$ with the one dimensional
CRT. \\
{\bf 3)} The lattice $L$ is not unique in general. For example, if the lattice spanned
by $\vec{v}_1,\dots,\vec{v}_n$, then it is also spanned by
$\vec{v}_1+\vec{v}_2,\vec{v}_2,\dots ,\vec{v}_n$ and the volume
is the same. \\
{\bf 4)} The multivariable CRT is sharp in the sense
that the two conditions for solvabilty are necessary in general, as examples have shown. 
As examples with equal moduli show other conditions for solvability exist.
The Matthew trick sometimes allows linear algebra methods, but not in general
because  matrix might have determinant $0$ or even not be square.
Already the single variable CRT can not be solved with linear algebra alone. \\
{\bf 5)} The parallelepiped can be very long. An extreme case is the CRT
situation, where it has length $M=m_1 m_2 \cdots m_n$ and all
other widths are $1$.  \\
{\bf 6)} It would be useful to quantize how large the
diameter of the parallelepiped is. If $A$ is unimodular, the eigenvalues of $A$ are
relevant.  \\
{\bf 7)} A modern algebraic formulation of the single variable CRT is that for
pairwise co-prime elements $m_1,\dots ,m_n$
in a principal ideal domain $R$, the map
$x  \mod  M \to (x  \mod  m_1, \dots , x  \mod  m_n)$
is an isomorphism between the rings
$R/(m_1 R) \times R/(m_n R)$ and $R/(M R)$.
Using the same language, the multivariable CRT can be restated that if $R$ is a principal
ideal domain and a ring homomorphism $A: R^n \to R^n$, for which the $i'th$ row of $A$
is not zero in $R/(q_i R)$ with factors $q_i>1$ of $m_i$, there is a lattice
$L$ in $R^n$ such that $A$ is a ring isomorphism
between $R^n/L$ and $R/(m_1 R) \times \cdots \times R/(m_n R)$.
When seen in such an algebraic frame work, the result is quite transparent
and might be "well known" in the sens that the multivariable CRT could well have entered as a homework
in an algebra text book, but we were unable to locate such a place yet. Also a
search through number theory text books could not reveal the statement
of the multivariable CRT.  \\
{\bf 8)} While the problem of {\bf systems} of linear modular equations
$A \vec{x} = \vec{b}  \mod  \vec{m}$ with different moduli
$m_i$ studied here certainly is elementary, the lack of linear algebra and group theory
two thousand years ago could explain why it had not been studied early.
The problem has the CRT as a special case and must in general 
be understood and solved without linear algebra. Indeed,
one of the proofs of the CRT essentially goes over to the multivariable CRT.
But the constructive aspect of finding $L$ and effectively inverting $\phi$
is interesting and much more difficult than in the special case of the CRT.   \\
{\bf 9)} There is unique solution to systems of modular equations
if and only if there is a line
$A (t \vec{v}) \mod \vec{m}$ which covers the entire torus
$\Y = \Z_{m_1} \times \cdots \times \Z_{m_n}$. If $\vec{v}$ is known,
then it reduces the multivariable CRT problem to a CRT problem.  \\
{\bf 10)} It is no restriction of generality to assume the matrix $A$ to be square. 
If we have less variables, we can add some zero column vectors and dummy variables which
will be set to zero. If we have more variables, we can duplicate some of the equations.
Both of these "completions" do not change anything in the theorem. \\
{\bf 11)} Systems of modular equations have either a unique solution,
no solution or finitely many solutions. In the third case, the number
of solutions is a factor of $M=m_1 \cdot \cdots \cdot m_n$.  \\
{\bf 12)} Any system linear modular equations can be written as a linear system $B x =y$ with
one variable more. For example,
\begin{eqnarray*}
2x_1+3x_2&=&1  \mod  5 \\
3x_1+5x_2&=&1  \mod  7
\end{eqnarray*}
can be written as
\begin{eqnarray*}
2x_1+3x_2+5x_3 &=&1 \\
3x_1+5x_2+7x_3 &=&1  \; .
\end{eqnarray*}
{\bf 13)} An important case is when we have only one equation $Ax=y$ like
$$ 3x+5y+7z=11  \; . $$
Then the system is solvable if and only if $\gcd(A_{11},\dots,A_{1n})$
divides $y$. This is a central result in linear Diophantine equations
(see e.g. \cite{Andreescu} Theorem 2.1.2). Note that our theorem covers
only the 'if' part here: if we rewrite the system as a modular equation like
$$ 3x+5y=11  \mod  7 $$
and one of the coefficients has no common divisor with $7$, then
$\gcd(A_{11},\dots,A_{1n})=1$. \\
{\bf 14)} Not every linear Diophantine system $Bx=y$ can be rewritten
as a modular system. The book \cite{Andreescu}
mentions a problem from the 18th international math olympiad:
{\it Show that $A x=0$ be a linear system of equations with
$A_{ij} \in \{-1,0,1\}, 1 \leq i \leq p, 1 \leq j \leq 2p$
has a nonzero integer solution vector $x$ with $|x_j| \leq q$.} \\
{\bf 15)} If we write down a random system of linear modular equations 
$A \vec{x} = \vec{b} \; \mod\; \vec{m}$ like taking random integers $\{0,1, \dots ,n\}$
in each entry of the matrix, vector $\vec{b}$ and $\vec{m}$. What is the chance to have
a solution? It is well known that the probability of two numbers to be coprime is asymptotically
$1/\zeta(2)=6/\pi^2 \sim 0.61...$. Thus the condition that $p_i$ is not coprime to any of the 
row entries $A_{ij}$ has probability $(1-1/\zeta(2))^n$ and the condition to have this in one row
is bound above by $n (1-1/\zeta(2))^n$ which goes to zero. 
The only relevant condition asymptotically is therefore the second condition that all the $m_i$ are 
pairwise prime. The probability of the vector $\vec{m}$ to be coprime is $1/\zeta(n)$ which goes to $1$ 
exponentially fast, but the probability to be pairwise coprime goes to zero. 
Thus we can only say that conditioned to the pairwise coprimality assumption of the $m_i$, 
a random linear modular system asymptotically has a solution almost surely. \\
{\bf 16)} A different generalization of the CRM theorem where the concept of congruence is 
generalized can be found in \cite{Fiol}. 
In \cite{GGR}, the CRT has been generalized using a more general group context. The authors
apply the theory to systems $Ax=b \; \mod \vec{m}$ in section 2 (page 1205 of the paper). 

\section{More about the proof}

Row operations as used in Gaussian elimination are not in general permitted
to solve the problem $A \vec{x} = \vec{b}  \mod  \vec{m}$ 
because each row is an equation in a different ring of integers. 
But the geometric solution of the CRT can be generalized to solve the general 
case as well as to locate {\bf small} solution vectors.  \\

Let us prove the multivariable CRT in more detail as in the introduction.
Assume $\gcd(m_i,m_j)=1$ for all $i \neq j$ 
and that for all $i=1,\dots,n$, there exists $j$ such that $\gcd(a_{ij},m_i)=1$.
We show that there is a solution $\vec{x}$ to the linear system 
$A \vec{x} = \vec{b}  \mod  \vec{m}$ for all 
$\vec{b}$. We also have to prove that the solution $\vec{x}$ is unique in a parallelepiped spanned by 
$n$ vectors. This parallelepiped contains $M=m_1 m_2 \cdots m_n$ lattice points.  \\

{\bf I. Existence}. \\
We have seen that $\phi: x \to A x  \mod  \vec{m}$ is a group homomorphism from 
$X=\Z^n$ to the finite group $\Y = \Z_{m_1} \times \cdots \times \Z_{m_n}=\Y/L$.
We can think of $\Y$ as a discrete torus
with $M=m_1 \cdot m_2 \dots \cdot m_n$ lattice points. We can think of the order $M$
of the group as the "volume" of the torus $\Y$. 
${\rm ker}(\phi)$ is a lattice $L_A$ satisfying  $\X = X/L_A$ and
${\rm im}(\phi)$ is a subgroup of $\Y$. The quotient group $\X$ and the image are isomorphic. 
The kernel $L_A$ is a lattice in $X$ spanned by $n$ vectors $\vec{k}_1, \dots
,\vec{k}_n$. We think of the quotient $\X = X/L_A$ as a "discrete torus" with
"volume" $|\X|$.
Because $\phi$ is injective on $\X$, there exist vectors $\vec{y}_i \in \Y$ such that 
$\bigcup_{i=1}^{d(A)} A(\X) + \vec{y}_i = \Y$ and 
$d(A) {\rm vol}(\X) = {\rm vol}(\Y)$. 
If $d(A)=1$ the problem is solvable: for $\vec{b}$, there exists a unique integer vector
$\vec{x}$ in $\X$ such that $A \vec{x} = \vec{b}  \mod  \vec{m}$. \\

\begin{figure}
\scalebox{0.25}{\includegraphics{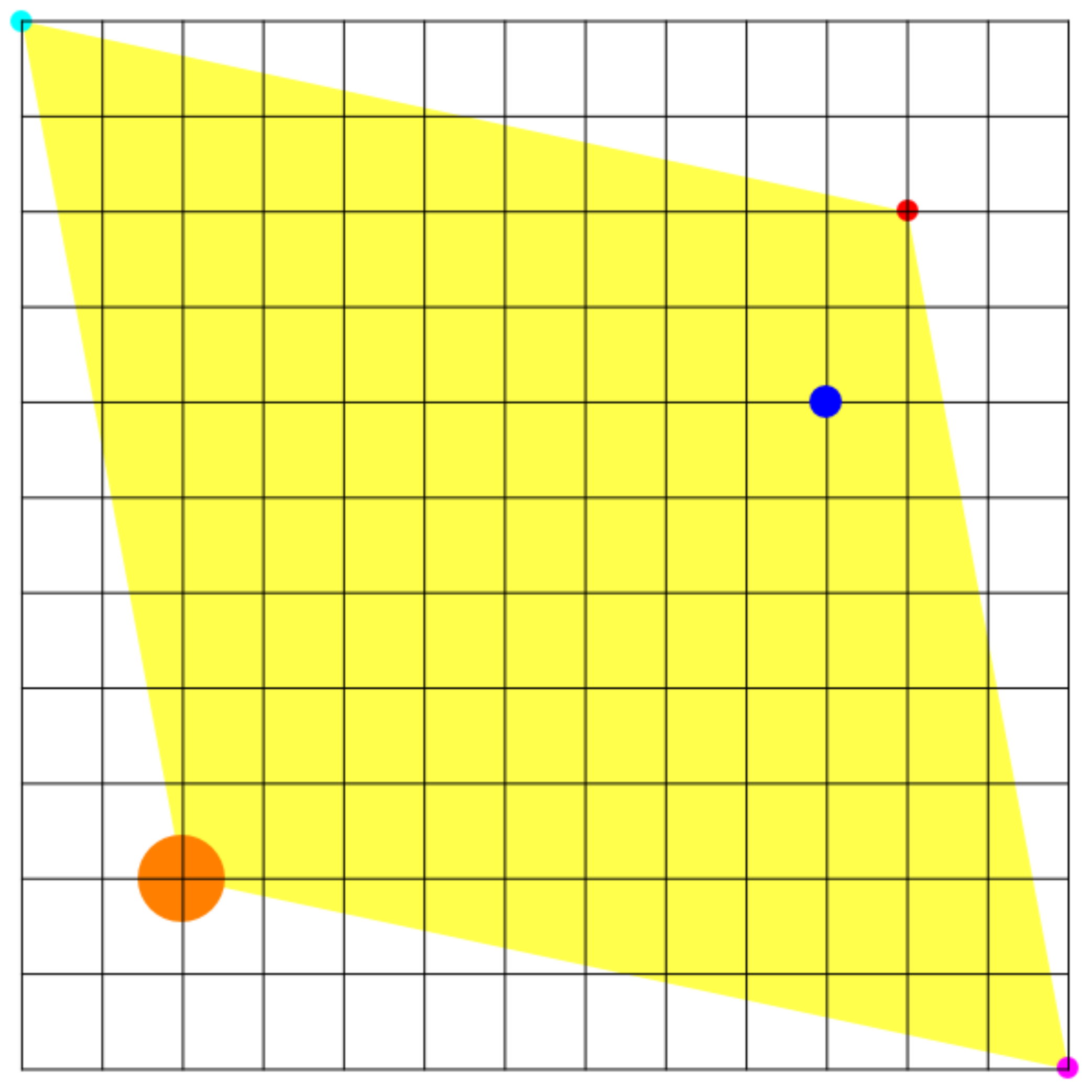}} 
\scalebox{0.25}{\includegraphics{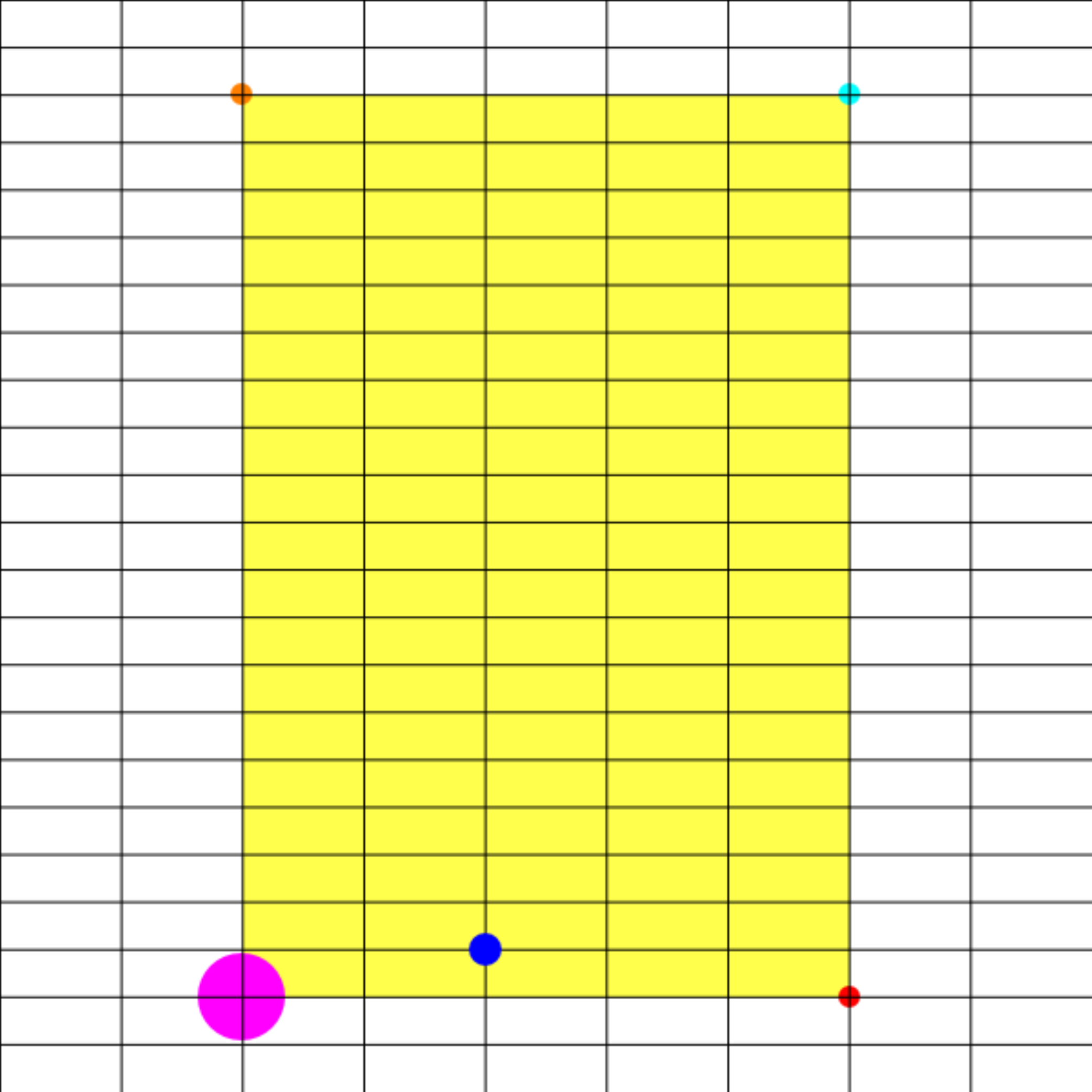}}
\caption{
The map $\phi$ is a bijection between the two finite
sets $\X=\Z^n/L$ and $\Y=\Z_{m_1} \times \cdots \times \Z_{m_n}$.
The picture visualizes the linear system
$4  x + 17 y = 2  \mod  5, 11 x + 13 y = 1  \mod 19$
which has the solution $(x,y)=(8,5)$. The vectors $(11,-2),(-2,9)$ span the
lattice of the kernel. 
}
\end{figure}

{\bf II. Construction of a solution} 
In order to construct a solution of $A \vec{x} = \vec{b}  \mod  \vec{m}$,
we have to find both the lattice $L_A$ and a particular solution
$\vec{x}$ of the equation $A\vec{x}=\vec{b}  \mod  \vec{m}$, 
then reduce $x$ modulo the lattice to make it small. \\

{\bf i) Finding a particular solution} \\
To find the particular solution, we pick {\bf Pivot elements} $a_{i j(k)}$
in the matrix $A$: these are entries in the $i$'th row which are relatively prime 
to $m_i$. Let $\vec{e}_j$ denote the standard basis in n-dimensional space.
Consider a curve $\vec{x}(t) = t \vec{e}_{j(1)}$ in $X$, where $t$ is an 
integer. Using the assumption on the rows, we see that there exists 
an integer $t_1$ so that $\vec{x}(t)$ solves the first equation. 
Now take the curve $\vec{x}(t) =  t_1 \vec{e}_{j(1)} + t m_1 \vec{e}_{j(2)}$. 
There is an integer $t_2$ so that $\vec{x}(t)$ solves the second equation.
We use here the fact that $m_1$ is relatively prime to $m_2$. Note that
$\vec{x}(t)$ solves the first equation for all $t$. 
Continue now until the final solution 
$\vec{x}(t) = \sum t_i (m_1 \cdots m_i) \vec{e}_{i j(i)}$ is found.

{\bf Remark:} Because $\X$ and $\Y$ are isomorphic groups, there is a one-dimensional 
``discrete line" $\vec{r}(t)=t \vec{v}$ such that $\vec{r}(t)/L_A$ covers  $\Y$. 
We could find a special solution by searching on that line, which is a problem of the 
CRT. We have the problem to find a vector $\vec{v}$ such that 
$A\vec{r}(t) = A (t \vec{v}) = t \vec{w}$ covers the entire set $\Y$.  \\

Lets look at the example
\begin{eqnarray*}
 4  x + 17 y &=& 2 \;  \mod \; 5  \\
 11 x + 13 y &=& 1 \;  \mod \; 19  \; . 
\end{eqnarray*}
Because all moduli are prime, any nonzero matrix element is a Pivot element in this example.
We can pick $j(1)=1, j(2)=2$. Take the line 
$\vec{x}(t) = t \vec{e}_1 = \left[ \begin{array}{c} t \\ 0 \end{array} \right]$ and look for
$t_1$ such that the first equation is solved.  This means $4 x = 2  \mod  5$ which 
gives $x=3$.  \\

Now consider the line $\vec{x}(t) = 3 \vec{e}_1  + 5 t \vec{e}_2 
= \left[ \begin{array}{c} 3  \\ 5t \end{array} \right]$. For every $t$, the first 
equation is solved. The second equation gives $33 + 65 t = 1  \mod  19$. 
which is solved by $t=15$. So, 
$\vec{x}(1) = \left[ \begin{array}{c} 3 \\ 75 \end{array}  \right]$ 
solves the system. \\

We could have solved the system also by taking the parametrized 
line $\vec{r}(t) = (x(t),y(t)=(t,t)$ which is mapped by $A$ to the 
line $(A \vec{r}(t)) = (11t,25t) =(t,5t)$ on the discrete torus. 
It leads to the CRT problem
\begin{eqnarray*}
 t    &=& 2 \;  \mod \; 5  \\
 5 t  &=& 1 \;  \mod \; 19 
\end{eqnarray*}
which is solved for $t=42$ so that we get the particular solution 
$(x,y)=\vec{r}(42)=(42,210)$. \\

{\bf ii) Finding the kernel}. \\
On every line $\vec{r}(t)=(0,...,t,...0)$, there is a point $\vec{x}$ which
solves $A \vec{x}=\vec{0}  \mod  \vec{m}$. By the pigeon hole principle, 
the set $\{ A \vec{x}   \mod  \vec{m}\; | \; t \in [0,M] \}$ must hit some 
point in the image twice. But then 
$A (\vec{x} - \vec{y})=\vec{0}  \mod  \vec{m}$.
If we take $n+1$ equations
$A \vec{x}^{(i)} = y^{(i)}  \mod  \vec{m}$, then the collection
of vectors $y^{(i)}$ is linearly dependent.  Therefore, there exist 
rational numbers $c_i$ such that 
$\sum_j c_j y^{(j)} = \vec{0}  \mod   \vec{m}$
so that $\sum_j c_j \vec{x}^{(j)}=\vec{0}$ is in the kernel.
After multiplying with a common multiple of the denominators of the 
rational numbers $c_j$, we can assume $c_j$ to be integers. 
We first look for $n$ linearly independent vectors $\vec{k}_i$ solving 
$A \vec{k}_i = \vec{0}  \mod  \vec{m}$. Define 
$K$ to be a matrix which contains the vectors $\vec{k}_i$ as row vectors.
Use the LLL algorithm (\cite{cohen} section 2.6)
to reduce the lattice to a small lattice. It turns
out that this is often not good enough. The lattice has a size which is a multiple
of $p$. In order to find the lattice $L_A$ of the kernel, we need
$$   {\rm det}(K) = M = m_1 m_2 \cdots m_n   \; . $$
Let $k={\rm det}(A)/p$ and let $k=q_1... q_l$ be the prime factorization of $k$. 
We can now look whether $\vec{y}^{(i)}/q_j$ are integer vectors in the kernel for 
each $i=1,\dots,n$ and $j=1,\dots,l$ and if yes replace the basis vectors.
Successive reduction of the lattice can lead us to the kernel for which
${\rm det}(K)=p$. If not, we start all over and construct a new lattice. 

\section{Outlook}

{\bf Complexity}. \\
For a linear system of equations $A \vec{x} = \vec{b}  \mod  \vec{m}$,
the problem is to find a maximal lattice $L_A$ in $\Z^n$, which is
the kernel of the group homomorphism $\vec{x} \mapsto A \vec{x}$ from
$\Z^n$ to the module $\Y=\Z_{m_1} \times \cdots \times \Z_{m_n}$
so that its fundamental region $\X$ is mapped bijectively onto
$A \X \subset \Y$. Next, we have to decide whether $\vec{b}$ is in
$A \X$ and if affirmative, construct $\vec{x} \in \X$
which satisfies $A \vec{x}=\vec{b}  \mod  \vec{m}$. How fast can this be done? \\
To find the kernel of the group homomorphism
$T(\vec{x}) = A \vec{x}  \mod  \vec{m}$, we produce
a large set of solutions of $T(\vec{x})=0$ and then
reduce this to a small lattice using the LLL algorithm. If $H$
is the matrix which contains the reduced kernel vectors as columns
then $A H = \vec{0}  \mod  \vec{m}$. In general,
${\rm det}(H) \neq M$, but we know that there exists a kernel for
which ${\rm det}(H) = M$. How do we find such a matrix $H$ directly? \\
To decide whether $A \vec{x} = \vec{b}  \mod  \vec{m}$ has a solution or not
is addressed in \cite{DoSt01}. The multivariable CRT gives a criterion for the
existence of solutions. One can often detect, whether one of the equations
has no solution. This happens for example, if $a_{i1},\dots ,a_{in},m_i$ have a common
denominator which is not shared by the denominators of $b_i$. If all $m_i$
are equal to some number $m$ with distinct prime factors can make a fast decision: by
the CRT, a solution exists if and only if a solution exists modulo
each prime factor of $m$ and the later decisions can be done by computing
determinants in finite fields. \\

{\bf Iteration of modular linear maps}. \\
The map $T(\vec{x}) = A \vec{x}  \mod  \vec{m}$ 
defines a dynamical system on the finite group $\Z_{m_1} \times ... \times \Z_{m_n}$. 
Since the discrete torus $\Y$ does not match with the torus 
$\X$, orbits on this finite set behave in general rather irregularly.
The system can be extended to the real torus 
$R/(m_1 \Z) \times R/(m_n Z)$, where it is in general a 
hyperbolic map. The orbits behave differently, if $A$ is very singular, for example
if $A$ has only one column.  The map
$$
T  \left[ \begin{array}{c} x \\ y \end{array} \right]
 = \left[ \begin{array}{c}  
     31 x + 34 y \\
      3 x + 38 y \\
   \end{array} \right]  \; \mod 
   \left[ \begin{array}{c} 7 \\ 17 \end{array} \right]  $$
for example has 6 different orbits on $\Y$ with a 
maximal orbit length of 49. It seems difficult to find
ergodic examples with different moduli where ergodic means
that there is only one orbit besides the trivial orbit of $\vec{0}=(0,0)$
a case which appears for example in
$$
T  \left[ \begin{array}{c} x \\ y \end{array} \right]
 = \left[ \begin{array}{c}  
     18 x +  5 y \\
      7 x + 14 y  \\
   \end{array} \right]  \; \mod 
   \left[ \begin{array}{c} 37 \\ 37 \end{array} \right]   \; . $$
  

{\bf Systems of modular polynomial equations} \\
The algorithm to solve systems of linear modular equations extends also 
to solve systems of nonlinear polynomial equations 
$\vec{P}(\vec{x}) = \vec{b}  \mod  \vec{m}$ with
$$ P_k(x_1,...,x_n) = b_k  \mod   m_k $$
too, but in general, we do not have criteria which assure that 
such a system has solution. We need to solve the individual equations
An example is Chevally's theorem (i.e. \cite{goldman})
which tells that $P$ is a polynomial of degree smaller than $n$ and zero
constant term, then $P(x_1,...,x_n) = p$ can be solved as long as $p$ is prime.
Lets look at the general problem.
Start solving the first equation. Using 
$\vec{x}=(a_{11} t,...,a_{1n} t)$ we have to solve a problem for a single variable
$q_1(t)=0  \;  \mod \;  m_1$, where $q_1$ is a polynomial.
With a solution $t_1$, try to solve the second equation for $t$ using
$\vec{x}=(m_1 a_{21} t,.., m_n a_{2n} t) + (a_{11} t_1,...,a_{1n} t_1)$.
which solves the first equation etc.  \\
For example, consider the system of nonlinear modular equations
\begin{eqnarray*}
      x^2+y^3+z^2   &=& 1  \mod  5  \\
      x^3+2 y^4-z^2 &=& 1  \mod  7 \\
      3x-2y^3+5 z^4 &=& 7  \mod  11  \; . 
\end{eqnarray*}
Start with the "Ansatz" 
$(x,y,z) = (t,t,t)$. The first equation is $t^2(2+t) = 1  \mod  5$
which has the solution $t=2$. Now put $(x,y,z) = (2,2,2) + t \cdot 5 (1,1,1)$.
which solves the first equation and plug it into the second equation.
This is $(2+5t)^2 (2+3t+t^2)=1 \; {\rm mod 7}$ and solved for $t=0$. 
The point $(2,2,2)+t(5,5,5)=(2,2,2)$ solves also the second equation. 
Now plug-in $(2,2,2)+ 5 \cdot 7(0,2t,t)$, which solves
the first two equations for all $t$, into the third equation which
requires to solve $6+5 (2+35 t)^4 - 2 (2+70 t)^3=7  \mod  11$
which is equivalent to $4+4t+2t^2+5 t^3+3 t^4=7  \mod  11$
and solved for $t=1$. So, the final solution found is 
$(2,2,2)+ 5 \cdot 7(0,2,1)=(2,72,37)$. This method does not necessarily find
small solutions like $(2,6,4)$.  \\

Nonlinear systems of modular equations with different moduli but with one
variable can be treated with the CRT. Ore \cite{ore}
illustrates it with the example
\begin{eqnarray*}
   x^3-2x+3&=&0   \mod  7 \\
   2x^2    &=&3   \mod  15 \; . 
\end{eqnarray*}
Because the first equation has solutions  $x=2  \mod  7$
and the second has solutions $x= \pm 3  \mod  15$, we are in the
case of the CRT. In general, systems of polynomial equations in one
variable often lead to CRT problems. 

\vfill

\pagebreak

\section{Mathematica code}

Here is some example code if a reader wants to experiment. 
The first few lines find and plot the lattice of solutions 
$A\vec{x}=0  \mod  \vec{p}$ by brute force and then do LLL 
reduction.

\lstset{language=Mathematica} \lstset{frameround=fttt}
\begin{lstlisting}[frame=single]
a = 13; b = 19; c = 11; d = 15; q = 29; p = 31;
s = {}; Do[If[Mod[a*x+b*y,p]==0 && Mod[c*x+d*y,q]==0, 
  s = Append[s, {x, y}]], {x, -100, 100}, {y, -100, 100}];
L=LatticeReduce[s]; M={{0,0},L[[1]]+L[[2]]};
Graphics[{{Blue,PointSize[0.01],Map[Point, s]},
  {Yellow,Polygon[{M[[1]],L[[1]],M[[2]],L[[2]]}]},
  {Red,PointSize[0.02],Map[Point,Join[L,M]]}}]
\end{lstlisting}

The following routines find solutions according to 
the proof of the multivariable CRT: 

\lstset{language=Mathematica} \lstset{frameround=fttt}
\begin{lstlisting}[frame=single]
Pivot[A_,P_]:=Module[{n=Length[A],p},p=Table[0,{n}]; 
 Do[Do[If[GCD[A[[i,j]],P[[i]]]==1,p[[i]]=j],{j,n}],{i,n}];p];
GCDv[p_]:=Max[Table[GCD[p[[i]],p[[j]]],
 {i,Length[p]},{j,i+1,Length[p]}]];
HasSol[A_,P_]:=Module[{p=PivotEntries[A, P]},
 Product[p[[i]],{i,Length[p]}]>0 && GCDv[P]==1];
CheckSol[A_,B_,X_,P_]:=
 Table[Mod[(A.X)[[i]]-B[[i]],P[[i]]],{i,Length[A]}];
LinearModSol[A_,B_,P_]:=Module[{n=Length[A],p,X,q,sum,j,pi},
 p=Pivot[A,P]; X=Table[0,{n}]; q=1; 
 Do[j=p[[i]]; pi=P[[i]]; bi=B[[i]]; aij=A[[i,j]];
  sum=Sum[A[[i,k]]*X[[k]],{k,n}];
 t=Mod[PowerMod[q*aij,-1,pi]*(bi-sum),pi];
 X[[j]]=X[[j]]+t*q; q=q*pi,{i,n}]; X];

A={{4,3,3,3},{1,-1,5,5},{1,5,3,7},{1,5,2,2}}; 
B={1,2,3,4}; P = {3,5,7,11}; X=LinearModSol[A,B,P]
CheckSol[A,B,X,P]
\end{lstlisting}

Finally, here is the verification of the example in the introduction

\lstset{language=Mathematica} \lstset{frameround=fttt}
\begin{lstlisting}[frame=single]
A={{101,107},{51,22}}; b={3,7}; m={117,71}; 
x={25,65}; L={{73,47},{-82,61}}; 
{Mod[A.x-b,m],Mod[A.L[[1]],m],Mod[A.L[[2]],m]}
\end{lstlisting}

\pagebreak

\bibliographystyle{plain}

\begin{thebibliography}{10}

\bibitem{Baumslag}
G.~Baumslag.
\newblock {\em Topics in combinatorial group theory}.
\newblock {Birkh\"auser} Verlag, 1993.

\bibitem{cohen}
H.~Cohen.
\newblock {\em A course in computational algebraic number theory}.
\newblock Graduate Texts in Mathematics. Springer, fourth printing edition,
  2000.

\bibitem{ConnorRobertsonSunzi}
J.J.~O' Connor and E.F. Robertson.
\newblock Sun {Zi}.
\newblock {http://www-history.mcs.st-andrews.ac.uk/Biographies/Sun\_Zi.html},
  2003.

\bibitem{Dauben}
J.W. Dauben.
\newblock Chinese mathematics.
\newblock In Victor Katz, editor, {\em Mathematics of
  Egypt,Mesopotamia,China,India and Islam, A sourcebook}. Princeton University
  Press, 2007.

\bibitem{dicksonII}
L.E. Dickson.
\newblock {\em History of the theory of numbers.{V}ol.{II}:{D}iophantine
  analysis}.
\newblock Chelsea Publishing Co., New York, 1966.

\bibitem{DoSt01}
A.~Dolzmann and T.~Sturm.
\newblock Parametric systems of linear congruences.
\newblock In {\em Computer algebra in scientific computing (Konstanz, 2001)},
  pages 149--166. Springer, Berlin, 2001.

\bibitem{Fiol}
M.A. Fiol.
\newblock Congruences in z, finite abelian groups and the chinese remainder
  theorem.
\newblock {\em Discrete Mathematics}, 67:101--105, 1987.

\bibitem{Gauss}
C.F. Gauss.
\newblock {\em Disquisitiones Arithmeticae}.
\newblock Lipsiae, in Comissis apud Gerh. Fleischer, Jun., 1801.

\bibitem{goldman}
J.~R. Goldman.
\newblock {\em The Queen of Mathematics, a historically motivated guide to
  number theory}.
\newblock A.K. Peters, Wellesley, Massachusetts, 1998.

\bibitem{GGR}
J.~Gopala~Krishna and K.~Raja~Rama Gandhi.
\newblock General, one \& several variable extentions of {C}hinese remainder
  theorem.
\newblock {\em Int. J. Math. Sci. Appl.}, 1(3):1201--1213, 1215--1223, 2011.

\bibitem{how01}
N.A. Howgrave-Graham and N.P. Smart.
\newblock Lattice attacks on digital signature schemes.
\newblock {\em Designs, Codes and Cryptography}, 23:283--290, 2001.

\bibitem{Hua}
L.K. Hua.
\newblock {\em Introduction to Number theory}.
\newblock Springer Verlag, Berlin, 1982.

\bibitem{Joyner}
D.~Joyner.
\newblock {\em Adventures in Group Theory, Rubik's Cube, Merlin's Machine and
  Other Mathematical Toys}.
\newblock Johns Hopkins University Press, second edition, 2008.

\bibitem{Katz2011}
V.J. Katz.
\newblock {\em A history of Mathematics}.
\newblock Addison-Wesley, second edition, 2011.

\bibitem{Chinchin92}
A.Ya. Khinchin.
\newblock {\em Continued Fractions}.
\newblock Dover, third edition, 1992.

\bibitem{Koshy}
T.~Koshy.
\newblock {\em Elementary Number Theory with Applications}.
\newblock Elsevier, 2.nd edition, 2007.

\bibitem{Libbrecht}
U.~Libbrecht.
\newblock {\em Chinese Mathematics in the Thirteenth Century}.
\newblock MIT Press, 1973.

\bibitem{Martzloff}
J-C. Martzloff.
\newblock {\em A history of Chinese Mathematics}.
\newblock Springer Verlag, second edition, 1997.

\bibitem{mathews1892}
G.B. Mathews.
\newblock {\em Theory of Numbers}.
\newblock Cambridge, Deighton, Bell and Co.; London, G. Bell and Sons, 1892.

\bibitem{Nicomachus}
Nicomachus of~Gerasa.
\newblock {\em Introduction to Arithmetic, translated by M.L. D'Oodge}.
\newblock McMilloan Company, 100 AC est.
\newblock http://www.archive.org/details/NicomachusIntroToArithmetic.

\bibitem{ore}
O.~Ore.
\newblock On the averages of the divisors of a number.
\newblock {\em Amer. Math. Monthly}, 55:615--619, 1948.

\bibitem{sheng88}
Kang~Sheng Shen.
\newblock Historical development of the {C}hinese remainder theorem.
\newblock {\em Arch. Hist. Exact Sci.}, 38(4):285--305, 1988.

\bibitem{Kangsheng}
S.~Kang Sheng.
\newblock Historical development of the {C}hinese remainder theorem.
\newblock {\em Arch. Hist. Exact Sci.}, 38(4):285--305, 1988.

\bibitem{Stillwell}
J.~Stillwell.
\newblock {\em Mathematics and its History}.
\newblock Undergraduate Texts in Mathematics. Springer, 1989.

\bibitem{Swetz}
F.~Swetz.
\newblock The evolution of mathematics in ancient china.
\newblock {\em Mathematics Magazine}, 52:10--19, 1979.

\bibitem{Andreescu}
I.~Cucurezeanu T.~Andreescu, D.~Andrica.
\newblock {\em An Introduction to Diophantine Equations}.
\newblock {Birkh\"auser}, 2010.

\bibitem{YanShiran}
L.~Yan and D.~Shiran.
\newblock {\em Chinese Mathematics, A concise history}.
\newblock Oxford Science Publications. Clarendon Press, 1987.
\newblock Translated by J.N. Crosley and A.W. Lun.

\end{thebibliography}

\end{document}